\documentclass[11pt]{article}

\usepackage{fullpage}
\usepackage{amsmath, amsthm, amsfonts, amssymb, amstext, mathrsfs, enumerate}
\usepackage{graphicx, ragged2e, lscape, framed, xcolor}
\usepackage{subfiles}

\theoremstyle{plain}
\newtheorem{theorem}{Theorem}[section]
\newtheorem{lemma}[theorem]{Lemma}

\newtheorem{conjecture}[theorem]{Conjecture}

\newtheorem{claim}{Claim}[section]

\numberwithin{equation}{section}
\allowdisplaybreaks

\usepackage[pagebackref]{hyperref}
\hypersetup{
	colorlinks=true,
    urlcolor=purple,
	linkcolor=purple,
    citecolor=purple,
}

\DeclareMathOperator{\tr}{tr}
\def\x{\mbox{\boldmath $x$}}

\title{\textbf{Energy and independence number}}
\author{Hitesh Kumar \and Shivaramakrishna Pragada}
\date{}

\begin{document}
\maketitle
\begin{abstract} 
For a graph $G$ of order $n$, with adjacency eigenvalues $\lambda_1(G) \geq  \cdots \geq \lambda_n(G)$, the \emph{energy} of $G$ is defined to be 
\[\mathcal{E}(G)=\sum_{i=1}^{n} |\lambda_i(G)|.\]
A well-known conjecture from the 1980s by Fajtlowicz states that for any graph $G$, 
\[\mathcal{E}(G) \ge 2\left(n-\alpha(G)\right),\]
where $\alpha(G)$ denotes the independence number. We prove this conjecture.
\end{abstract}

\noindent
\textbf{Keywords:} Graph Energy, Independence number, Semidefinite Programming

\noindent
\textbf{MSC2020:} 05C50, 15A18, 90C22.

\section{Introduction}

Let $G = (V(G), E(G))$ be a finite simple graph of order $n = |V(G)|$ and size $m = |E(G)|$. If two vertices $u, v\in V(G)$ are adjacent in $G$, we write $u\sim v$, otherwise we write $u\nsim v$. Let $\alpha(G)$ denote the independence number of $G$. For a vertex $v\in V(G)$, the \emph{open-neighbourhood} (resp. \emph{closed-neighbourhood}) of $v$ is the set $N(v)=\{u: u\sim v\}$ (resp. $N[v] = N(v)\cup \{v\}$). For a subset $U \subseteq V(G)$, let $G-U$ denote the subgraph obtained after deleting the vertices in $U$ from $G$. 

The \textit{adjacency matrix} of $G$ is an $n\times n$ matrix $A(G) = [a_{uv}]$, where $a_{uv} = 1$ if $u\sim v$ and $a_{uv} = 0$, otherwise. The \emph{eigenvalues} of $G$ are the eigenvalues of $A(G)$. Since $A(G)$ is a real symmetric matrix, all eigenvalues of $A(G)$ are real which we enumerate as 
\[\lambda_1(G) \geq  \cdots \geq \lambda_n(G).\] 
The \emph{energy} of $G$, denoted $\mathcal{E}(G)$, is defined to be 
\[\mathcal{E}(G):=\sum_{i=1}^{n} |\lambda_i(G)| = 2\sum_{\lambda_i > 0} \lambda_i(G) = -2\sum_{\lambda_i < 0} \lambda_i(G),\]
where the last two equalities hold because $\tr(A(G)) = 0$. The energy of a graph is an extensively studied parameter and finds applications in many areas, including mathematical chemistry; see the surveys by Gutman \cite{Gutman_2001, Gutman_2017_survey}, Nikiforov \cite{Nikiforov_2016_GraphEnergy}, and the citations in and of \cite{Abiad_2025_SDPlenses, Akbari_Seidel_2020, Einollahzadeh_2024}.

Using Graffiti in the 1980s, Fajtlowicz \cite{DeLaVina_2005, Fajtlowicz_1980} proposed the following conjecture concerning the energy and the independence number of graphs, which appears in Written on the Wall (cf. \cite{Aouchiche_Hansen_2010}) and was recently popularized by Liu and Ning \cite{Liu_Ning_OpenProblems_2023} in their survey of unsolved problems in spectral graph theory. 

\begin{conjecture}[{\cite[Conjecture \#543, Table 6]{Aouchiche_Hansen_2010}} cf. \cite{Liu_Ning_OpenProblems_2023}]\label{conj:energy}
    For any graph $G$ of order $n$, we have 
    \[ \mathcal{E}(G)\ge 2(n-\alpha(G)).\]
\end{conjecture}

Despite its simplicity, the conjecture has resisted a proof for a long time. Recall that for any graph $G$ of order $n$,
\[ \alpha(G) + \tau(G) = n,\]
where $\tau(G)$ is the \emph{vertex cover number} of $G$. Reformulated in terms of $\tau(G)$, Conjecture \ref{conj:energy} reads: for any graph $G$, we have 
\begin{equation}\label{eq:tau}
   \mathcal{E}(G)\ge 2\tau(G). 
\end{equation}
Apparently unaware of the original appearanc of Conjecture \ref{conj:energy}, Akbari, K{\"u}{\c{c}}{\"u}k{\c{c}}if{\c{c}}i, Saveh, and Yaz{\i}c{\i} \cite{Akbari_2025_Vertex_cover} posed \eqref{eq:tau} as a conjecture in their recent paper. They were motivated by a result of Wang and Ma \cite{Wang_Ma_2017}, who established 
\begin{equation}\label{eq:Wang_Ma}
   \mathcal{E}(G)\ge 2(\tau(G)-c(G)), 
\end{equation}
where $c(G)$ is the number of odd cycles in $G$. Akbari et al. \cite{Akbari_2025_Vertex_cover} strengthened \eqref{eq:Wang_Ma} and also verified Conjecture \ref{conj:energy} for several graph families, such as perfect graphs. Samanta \cite{Samanta_2025} established Conjecture \ref{conj:energy} for graph families such as split graphs, cycle-clique graphs, etc. Recently, Abiad, Coutinho, Juliano, and Reijnders \cite{Abiad_2025_SDPlenses} approached the problem via semidefinite programming and made significant progress. Indeed, they proved that
\begin{equation}\label{eq:fractional_chromatic}
    \mathcal{E}(G)\ge 2(n- \chi_f(\overline{G}))
\end{equation}
where $\chi_f(\overline{G})$ is the \emph{fractional chromatic number} of the complement $\overline{G}$ of $G$. 

In this paper, we settle Conjecture \ref{conj:energy}.

\begin{theorem}\label{thm:main}
    For any graph $G$ of order $n$, 
    \[ \mathcal{E}(G)\ge 2(n-\alpha(G)).\]
\end{theorem}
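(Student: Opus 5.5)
The plan is to use the variational description of energy as a trace norm. Since $\mathcal{E}(G)=\|A(G)\|_{*}$, duality gives
\[
\mathcal{E}(G)=\max\{\langle A(G),X\rangle : X=X^{T},\ \|X\|_{\mathrm{op}}\le 1\}.
\]
So it suffices to exhibit, for every graph $G$, a symmetric matrix $X$ with spectral norm at most $1$ and $\langle A(G),X\rangle\ge 2(n-\alpha(G))$. This is the semidefinite viewpoint behind \eqref{eq:fractional_chromatic}. I would combine it with a reduction step that lets me work with a minimal counterexample.

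\textbf{Step 1: pinching reduction.} For any partition $V(G)=V_1\cup\cdots\cup V_k$, the pinching inequality for the trace norm gives
\[
\mathcal{E}(G)\ \ge\ \sum_i \mathcal{E}(G[V_i]).
\]
Now let $G$ be a counterexample of minimum order. If some partition into at least two parts satisfies $\sum_i \alpha(G[V_i])=\alpha(G)$, then induction on each part yields $\mathcal{E}(G)\ge 2\sum_i(|V_i|-\alpha(G[V_i]))=2(n-\alpha(G))$, a contradiction. Hence the minimal counterexample $G$ is \emph{rigid}: every nontrivial partition strictly increases the total independence number. In particular, for every vertex $v$ we have $\alpha(G-v)=\alpha(G)$ (take $V_1=\{v\}$), so no vertex lies in every maximum independent set. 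Cliques satisfy the bound with equality, $\mathcal{E}(K_k)=2(k-1)$, and they are the natural building blocks.

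\textbf{Step 2: a certificate from a maximum independent set.} Fix a maximum independent set $S$ and put $T=V\setminus S$, so $|T|=\tau(G)$. I want $X$ with $\|X\|_{\mathrm{op}}\le 1$ and $\langle A,X\rangle\ge 2|T|$, which means each vertex of $T$ must ``earn'' $2$. By rigidity, every $t\in T$ can be exchanged into some maximum independent set. More strongly, I expect a Hall-type argument (via König--Egerváry applied to the bipartite graph between $T$ and $S$, or to the cover structure) to give a system of disjoint cliques or stars, each with its own weights, such that every vertex of $T$ is covered. On a clique $K$ I would use $X_K=(J-I)/(|K|-1)$ restricted to $K$; on a star I would use the normalised signed incidence pattern. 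The global $X$ is then assembled from these local blocks.

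\textbf{Main obstacle.} The hard part is keeping $\|X\|_{\mathrm{op}}\le 1$ once the local certificates overlap, since vertices of $S$ are shared. In \eqref{eq:fractional_chromatic} the overlaps are controlled by fractional weights, and this is exactly what loses the gap between $\chi_f(\overline{G})$ and $\alpha(G)$. My first attempt would write $X=P-Q$ with $P,Q\succeq 0$ and $P+Q\preceq I$, the standard dual form for the nuclear norm, taking $P$ as a Gram matrix of unit vectors indexed by $T$ (orthogonal on non-edges). I would then use rigidity, in the form that every proper partition gains independence, to show that the Gram matrix can be chosen with $\operatorname{rank}\le\alpha(G)$. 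Concretely, the aim is $\langle A,P-Q\rangle\ge 2(n-\operatorname{rank} P)$ with $\operatorname{rank}P\le\alpha(G)$. Proving that such a low-rank orthogonal representation exists for rigid graphs is where I expect the real difficulty to lie.
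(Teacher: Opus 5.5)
\textbf{There is a genuine gap.} Your Step 1 is correct: pinching gives $\mathcal{E}(G)\ge\sum_i\mathcal{E}(G[V_i])$, and a minimal counterexample must be ``rigid.'' But the proposal stops where the theorem's difficulty begins. No certificate is ever constructed for rigid graphs, and the tools you sketch cannot supply one.

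\emph{Disjoint blocks fail on rigid graphs.} $C_5$ is rigid. Merging parts only increases $\sum_i\mathcal{E}(G[V_i])$, so every nontrivial partition of $C_5$ gives at most $\mathcal{E}(K_2)+\mathcal{E}(P_3)=2+2\sqrt2<6=2(n-\alpha(C_5))$. Hence no $X$ with $\|X\|_{\mathrm{op}}\le1$ that is block diagonal with respect to a nontrivial partition can work. So any certificate assembled from genuinely disjoint cliques and stars fails. As you note yourself, handling overlaps by fractional weights leads only to the $\chi_f(\overline{G})$ bound, which is $5$ for $C_5$. The overlapping case is exactly the part you leave unresolved.

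\emph{The low-rank fallback is degenerate as stated.} Suppose $P$ has unit diagonal on $T$ and $P+Q\preceq I$. Then $(I-P-Q)_{tt}=0$, which forces the $t$-th rows of $P$ and $Q$ to equal those of $I$ and $0$. Since every edge meets $T$, this gives $\langle A,P-Q\rangle=0$. You also give no reason why $\langle A,P-Q\rangle\ge 2(n-\operatorname{rank}P)$ or $\operatorname{rank}P\le\alpha(G)$ should hold.

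\emph{A misstatement in Step 2.} Rigidity yields $\alpha(G-v)=\alpha(G)$, i.e.\ no vertex lies in \emph{every} maximum independent set. It does not yield that every $t\in T$ lies in \emph{some} maximum independent set. The wheel on six vertices (a hub joined to $C_5$) is rigid, and its hub lies in none.

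\textbf{The missing idea.} Do not build a certificate for $G$ at all. Instead, run the induction through closed-neighbourhood deletions. The inequality $\alpha(G-N[v])\le\alpha(G)-1$ holds for every graph, so no rigidity analysis is needed. The induction hypothesis gives $\sum_v\mathcal{E}(G-N[v])\ge 2n(n-\alpha(G))-4m$. What remains is the averaging inequality $4m+\sum_v\mathcal{E}(G-N[v])\le n\,\mathcal{E}(G)$.

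The paper proves this in the primal direction. Take the spectral split $A=P-Q$. Then $P_{vv}=Q_{vv}$, and $P_{uv}=Q_{uv}$ whenever $u\nsim v$. So the Schur complement of $P_{vv}$ in $P[(V\setminus N[v])\cup\{v\}]$, and the analogous one for $Q$, are both PSD and differ by exactly $A(G-N[v])$. This gives $\mathcal{E}(G-N[v])\le 2\tr(P_v)$. Summing these traces over $v$, and controlling the result with the $2\times2$ minors of $P$ and $Q$ on each edge, yields the inequality. All the combinatorics your certificate was meant to encode is thereby handed to the induction hypothesis.
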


We prove the above result in Section \ref{sec:main_proof} using an inductive argument on the number of vertices. This is enabled by a \emph{neighbourhood deletion inequality} (Lemma \ref{lemma:deletion}) that we believe is of independent interest.

The well-known inertia bound states that 
\[\alpha(G)\le \min\{n-n^+(G),\, n-n^-(G)\},\]
where $n^+(G)$ (resp. $n^-(G)$) denote the number of positive (resp. negative) eigenvalues of $G$. 
As a corollary, we obtain 
\[ \mathcal{E}(G)\ge 2\max\{n^+(G),\, n^-(G)\} \]
which resolves \cite[Conjectures \#20, \#21, Table 6]{Aouchiche_Hansen_2010} (cf. \cite{Liu_Ning_OpenProblems_2023}).

Moreover, 
\[\alpha(G)\le \vartheta^-(G) \le\vartheta(G) \le \xi_f(\overline{G}) \le \chi_f(\overline{G})\] 
where $\vartheta^-(G)$ is the \emph{Schrijver theta function}, $\vartheta(G)$ is the \emph{Lov\'{a}sz theta function}, and $\xi_f(\overline{G})$ is the \emph{projective rank}. Theorem \ref{thm:main} recovers \eqref{eq:fractional_chromatic} and other results from \cite{Abiad_2025_SDPlenses}, and also resolves \cite[Conjecture 6.1]{Abiad_2025_SDPlenses}.

\section{Neighbourhood deletion inequality}
We recall the following semidefinite formulation of graph energy from Abiad et al. \cite{Abiad_2025_SDPlenses}.

\begin{lemma}[\cite{Abiad_2025_SDPlenses}]\label{lemma:SDP_formulation}
For every graph $G$,
\begin{equation}\label{eq:graph-sdp}
    \mathcal{E}(G)
    =2\min\bigl\{\tr(M): M\succeq 0, \ M-A(G)\succeq 0 \bigr\}.
\end{equation}
\end{lemma}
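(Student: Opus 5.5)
We prove the identity \eqref{eq:graph-sdp} by showing both inequalities, working in the eigenbasis of $A=A(G)$. Write the spectral decomposition $A=\sum_i \lambda_i v_iv_i^{T}$ with orthonormal eigenvectors $v_i$. Split it as $A=A^+-A^-$, where
\[
A^+=\sum_{\lambda_i>0}\lambda_i v_iv_i^{T}, \qquad A^-=\sum_{\lambda_i<0}(-\lambda_i) v_iv_i^{T}.
\]
Both $A^+$ and $A^-$ are positive semidefinite, and $\tr(A^+)=\tr(A^-)=\mathcal{E}(G)/2$.

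\textbf{Upper bound.} Take $M=A^+$. Then $M\succeq 0$ and $M-A=A^-\succeq 0$, so $M$ is feasible. Its trace is $\tr(A^+)=\sum_{\lambda_i>0}\lambda_i=\mathcal{E}(G)/2$. Hence the minimum is at most $\mathcal{E}(G)/2$.

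\textbf{Lower bound.} Let $M$ be any feasible matrix, and let $P=\sum_{\lambda_i>0}v_iv_i^{T}$ be the orthogonal projection onto the positive eigenspace of $A$. Since $0\preceq P\preceq I$, we have $I-P\succeq 0$.

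\begin{itemize}
\item Because $M\succeq 0$ and $I-P\succeq 0$, the trace of their product is nonnegative: $\tr(M(I-P))\ge 0$. This gives $\tr(M)\ge \tr(MP)$.
\item Because $M-A\succeq 0$ and $P\succeq 0$, similarly $\tr((M-A)P)\ge 0$. This gives $\tr(MP)\ge \tr(AP)$.
\item Finally, $\tr(AP)=\sum_{\lambda_i>0}\lambda_i=\mathcal{E}(G)/2$.
\end{itemize}

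Chaining these, $\tr(M)\ge \mathcal{E}(G)/2$ for every feasible $M$. So the minimum equals $\mathcal{E}(G)/2$, which is \eqref{eq:graph-sdp}.

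The only nontrivial ingredient is the fact that $\tr(XY)\ge 0$ whenever $X,Y\succeq 0$. It follows by writing $\tr(XY)=\tr(Y^{1/2}XY^{1/2})$ and noting that $Y^{1/2}XY^{1/2}\succeq 0$.

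Note that the argument uses only that $A$ is real symmetric; the graph structure plays no role.
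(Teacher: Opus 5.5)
Your proof is correct and complete. The paper itself gives no proof of this lemma; it imports it from Abiad et al., so there is no in-paper argument to compare your route against.

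A few remarks connect your argument to how the paper uses the lemma.

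\textbf{Relation to the paper's notation.} Your feasible point $A^+$ is exactly the matrix the paper calls $P$ in the proof of Lemma~\ref{lemma:deletion}, and your $A^-$ is its $Q$. Mind the name clash: your $P$ is the spectral projection onto the positive eigenspace, not the positive part of $A$.

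\textbf{Your lower bound as weak duality.} The lower bound is weak SDP duality for the dual program $\max\{\tr(AY): 0\preceq Y\preceq I\}$, with your projection $Y=P$ as an explicit dual certificate. Indeed,
\[\tr(M)-\tr(AP)=\tr\bigl(M(I-P)\bigr)+\tr\bigl((M-A)P\bigr)\ge 0.\]
This is the direction the paper actually relies on. In Claim~\ref{claim:vertex_deleted_trace} it deduces $\mathcal{E}(G-N[v])\le 2\tr(P_v)$ purely from the feasibility of $P_v$.

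\textbf{Correction to your closing remark.} The argument is not entirely free of graph structure. For an arbitrary real symmetric $A$, the optimal value of the program is $\sum_{\lambda_i>0}\lambda_i$. Identifying this with $\tfrac12\sum_i|\lambda_i|$ requires $\tr(A)=0$, i.e., the zero diagonal of an adjacency matrix. You use exactly this when you write $\tr(A^+)=\tr(A^-)=\mathcal{E}(G)/2$.
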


The following key lemma allows us to prove our main theorem through induction. 

\begin{lemma}[Neighbourhood deletion inequality]\label{lemma:deletion} For any graph $G$ of order $n$ and size $m$, 
\begin{equation}\label{eq:deletion_inequality}
     4m + \sum_{v\in V(G)}\mathcal{E}(G-N[v])\le n\,\mathcal{E}(G).
\end{equation}
\end{lemma}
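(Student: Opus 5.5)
The plan is to bound each term $\mathcal{E}(G-N[v])$ by restricting a single optimal certificate for $G$ from Lemma~\ref{lemma:SDP_formulation}, and then to improve that restriction so the total saving over all $v$ reaches $4m$. Let $A=A(G)$. Fix $M$ with $M\succeq 0$, $M-A\succeq 0$ and $\tr(M)=\mathcal{E}(G)/2$. For $v\in V(G)$ put $S_v=N[v]$ and $R_v=V(G)\setminus N[v]$. Principal submatrices of positive semidefinite matrices are positive semidefinite. Also $A(G-N[v])$ is exactly the $R_v\times R_v$ block of $A$. Hence $M_{R_vR_v}$ is feasible for $G-N[v]$, and
\[\mathcal{E}(G-N[v])\le 2\sum_{u\notin N[v]}M_{uu}.\]
Summing over $v$, and noting that $u\notin N[v]$ holds for exactly $n-1-d_u$ vertices $v$, gives
\[\sum_v \mathcal{E}(G-N[v])\le 2\sum_u (n-1-d_u)M_{uu}=n\,\mathcal{E}(G)-2\sum_u (d_u+1)M_{uu}.\]
So \eqref{eq:deletion_inequality} would follow from
\[\sum_u (d_u+1)M_{uu}\ge 2m.\]

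Local information gives part of this. For an edge $uw$, the $2\times 2$ principal minors of $M$ and of $M-A$ give $M_{uu}M_{ww}\ge \max\{M_{uw}^2,(M_{uw}-1)^2\}\ge 1/4$. Hence $M_{uu}+M_{ww}\ge 1$, and summing over edges yields $\sum_u d_uM_{uu}\ge m$. This is only half of what is needed. The needed inequality is in fact false in general: for the star $K_{1,4}$ with $M=(|A|+A)/2$ one gets $7<8$. The lemma itself still holds there, with $16\le 20$. So plain restriction wastes too much, and the main obstacle is to build a better certificate for $G-N[v]$.

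To fix this I would replace $M_{R_vR_v}$ by a Schur-complement-type correction that also uses the off-diagonal block $M_{R_vS_v}$. Concretely, I would try
\[M'_v=M_{R_vR_v}-M_{R_vS_v}X_v-X_v^{\top}M_{S_vR_v}+X_v^{\top}M_{S_vS_v}X_v\]
for a suitable $X_v$. This is $Y^{\top}MY$ with $Y=\binom{I}{-X_v}$, so it is automatically $\succeq 0$. The analogous expression $Y^{\top}(M-A)Y$ is also $\succeq 0$; it equals $M'_v-A(G-N[v])$ minus cross terms involving $A_{R_vS_v}$. These cross terms only involve edges from $R_v$ into $N(v)$, since $v$ has no neighbours in $R_v$. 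The aim is to pick $X_v$ so that they can be absorbed, and so that
\[\tr(M'_v)\le \tr(M_{R_vR_v})-(\text{a gain attributable to }v).\]
The gain should come from pairing $v$ with its neighbours, with each edge $vw$ contributing through $M_{vv}$, $M_{ww}$ and $M_{vw}$.

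The goal is for these gains, summed over $v$, to supply the missing $\sum_u d_uM_{uu}$-type term, that is, another $m$ via the same edge-wise inequality $M_{uu}+M_{ww}\ge 1$. A natural fallback is to average the bounds coming from the two dual certificates $M$ and $M-A$. Both have the same diagonal, but their off-diagonal entries differ by $A$, and this symmetry may be what produces the second factor of $m$.
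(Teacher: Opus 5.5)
Your restriction bound, the reduction to $\sum_u(d_u+1)M_{uu}\ge 2m$, and the $K_{1,4}$ computation showing that this reduction fails are all correct. That means the whole content of the lemma lies in the step you leave open, and there the proposal has a genuine gap: no $X_v$ is chosen, nothing is shown to be absorbed, and no lower bound on the total gain is proved.

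The choice that works is the Schur complement of the single entry $M_{vv}$. That is, take $X_v$ supported on the row of $v$ only, giving $M'_v=M_{R_vR_v}-M_{R_v v}M_{vR_v}/M_{vv}$. Since $A_{vv}=0$ and $A_{vR_v}=0$, the same operation applied to $M-A$ gives exactly $M'_v-A(G-N[v])$, with no cross terms at all. So feasibility is immediate; this needs $M_{vv}>0$, which is true for non-isolated $v$, and the paper simply reduces to connected graphs. If $X_v$ also has rows indexed by $N(v)$, you pick up $A_{R_vS_v}X_v+X_v^{\top}A_{S_vR_v}-X_v^{\top}A_{S_vS_v}X_v$, which is indefinite in general.

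The decisive missing idea is what makes the gain large enough. The gain at $v$ is $\sum_{u\notin N[v]}M_{uv}^2/M_{vv}$. It involves the \emph{non}-neighbours of $v$, not the edges at $v$ as you anticipate. It must also be bounded from \emph{below}, which $2\times 2$ minors cannot do, since they only give upper bounds on off-diagonal entries.

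The paper gets this lower bound from a global identity. Your optimal $M$ is necessarily $P$, the positive part of $A$, with $M-A=Q$. So $B:=M+(M-A)=P+Q$ satisfies $B^2=A^2$. Hence $\sum_{u\notin N[v]}B_{uv}^2=d_v-B_{vv}^2-\sum_{u\sim v}B_{uv}^2$, while $M_{uv}=B_{uv}/2$ for $u\notin N[v]$ and $M_{vv}=B_{vv}/2$. Substituting, the requirement becomes
\[\sum_{uv\in E(G)}\left(\frac{(B_{uu}-1)^2-B_{uv}^2}{B_{uu}}+\frac{(B_{vv}-1)^2-B_{uv}^2}{B_{vv}}\right)\ge 0.\]
This holds edge by edge, using $B_{uu}B_{vv}\ge(1+|B_{uv}|)^2$ (from the $2\times 2$ minors of $B+A=2P$ and $B-A=2Q$) together with AM--GM.

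This is a genuinely combined inequality, not ``$m$ from $\sum_u d_uM_{uu}$ plus another $m$ from the gains''. For $K_n$ every $R_v$ is empty, so all gains vanish and the bound must come from the diagonal terms alone, which it does with equality. In general, the term $\sum_u M_{uu}$ coming from the $+1$ in $d_u+1$ is exactly cancelled by the $-B_{vv}^2$ in the identity. Your suggestion of averaging the certificates $M$ and $M-A$ points toward $B$, but without $B^2=A^2$ it has no mechanism to reach the bound.
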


\begin{proof} Suppose that $G = G_1 \sqcup G_2$ where $G_1$ and $G_2$ are connected graphs. If $\eqref{eq:deletion_inequality}$ holds for $G_1$ and $G_2$, then see that 
\begin{align*}
4m + \sum_{v\in V(G)}\mathcal{E}(G-N[v]) & =  \left(4|E(G_1)|+\sum_{v\in V(G_1)}\bigl(\mathcal{E}(G_1-N_{G_1}[v]) + \mathcal{E}(G_2)\bigr)\right) \\
& \quad + \left(4|E(G_2)|  + \sum_{v\in V(G_2)}\bigl(\mathcal{E}(G_2-N_{G_2}[v]) + \mathcal{E}(G_1)\bigr)\right)\\
& \le |V(G_1)|\, \bigl(\mathcal{E}(G_1) + \mathcal{E}(G_2)\bigr)  + |V(G_2)|\,\bigl(\mathcal{E}(G_1) + \mathcal{E}(G_2)\bigr) \\
& = n\,\mathcal{E}(G).
\end{align*}
Thus, it suffices to prove \eqref{eq:deletion_inequality} for connected graphs.

So let $G$ be a connected graph of order $n\ge 2$. Throughout this proof, let $A= A(G)$. Write 
\[ A = P - Q,\]
where $P$ and $Q$ are the PSD matrices in the spectral decomposition of $A$ (see \cite[Theorem 4.1.5]{Horn_Johnson_2013}) and they satisfy $PQ = QP = 0$. Let 
\[ B: = P + Q.\]
Then $B^2 = A^2$. Since $A$ has zero diagonal, we see that
\begin{equation}\label{eq:B_entry}
    P_{vv} = Q_{vv} = \frac{1}{2}B_{vv}
\end{equation}
for all $v\in V(G)$. Moreover, we have 
\begin{equation}\label{eq:trace_P_energy}
    \sum_{v\in V(G)} P_{vv} = \tr(P) = \frac{1}{2}\mathcal{E}(G).
\end{equation}
Observe that $P_{vv} > 0$ for all $v\in V(G)$. Indeed, if $P_{vv} = 0$ for some $v$, then $B_{vv} = 0$. Since $B\succeq 0$, this forces $B_{uv} = 0$ for all $u\in V(G)$. But then $\deg(v) = (B^2)_{vv} = 0$, which contradicts the fact that $G$ is connected.

For $v\in V(G)$, let $S(v):=V(G)\backslash N[v]$ and let $\x_v$ be the column of $P$ corresponding to vertex $v$ restricted to $S(v)$. Define 
\[P_{v}:= P[S(v)] - \frac{\x_v\x_v^\top}{P_{vv}},\]
where $P[S(v)]$ denotes the principal submatrix of $P$ indexed by vertices in $S(v)$. 

\begin{claim}\label{claim:vertex_deleted_trace} For any vertex $v\in V(G)$,
   \[\mathcal{E}(G-N[v])\le 2\tr(P_v).\] 
\end{claim}

\begin{proof}
First, observe that $P_v$ is the Schur complement of the $1\times 1$ block $P_{vv}$ of the matrix $P[S(v)\cup \{v\}]$ (i.e., the principal submatrix of $P$ indexed by the vertices in $S(v)\cup \{v\}$). Since $P_{vv}>0$, by the generalized Schur-complement criterion for positive semidefiniteness (see \cite[Theorem 1.20]{Zhang_2005_Schur}), we conclude that $P_v\succeq 0$. 

Second, since $P_{uv} = Q_{uv}$ for all $u\in S(v)$, we see that $\x_v$ is also the restriction to $S(v)$ of the column of $Q$ corresponding to vertex $v$. Let 
\[ Q_v: = Q[S(v)] - \frac{\x_v\x_v^\top}{Q_{vv}}. \]
Then, similar to $P_v$, we see that $Q_v\succeq 0$. Now, since
\[ P_v - A(G-N[v]) = Q_v,\]
we see that $P_v-A(G-N[v])\succeq 0$. Thus, $P_v$ is feasible for the SDP formulation for $\mathcal{E}(G-N[v])$ given in Lemma~\ref{lemma:SDP_formulation} and the claim follows.
\end{proof}

\begin{claim}\label{claim:B_edge_inequality} For any edge $uv\in E(G)$, 
\[ \frac{(B_{uu}-1)^2-(B_{uv})^2}{B_{uu}} + \frac{(B_{vv}-1)^2-(B_{uv})^2}{B_{vv}}\ge 0.\]
\end{claim}

\begin{proof} For the edge $uv$, let 
\[x := B_{uu},\quad y := B_{vv}\quad \text{and}\quad z:=B_{uv}.\] 
Since $2P = B + A$ and $2Q = B-A$, the $2\times 2$ principal submatrices of $2P$ and $2Q$ corresponding to edge $uv$ satisfy
\[ 
\begin{pmatrix}
x & z+1\\
z+1& y
\end{pmatrix} \succeq 0 \quad \text{and}\quad 
\begin{pmatrix}
x & z-1\\
z-1 & y
\end{pmatrix} \succeq 0.\]
Their determinants give
\[xy\ge (z+1)^2 \quad \text{and}\quad xy\ge (z-1)^2,\]
which implies $ \sqrt{xy}\ge 1 + |z|$ and consequently
\begin{equation}\label{eq:sqrt_xy}
  (\sqrt{xy}-1)^2 - z^2\ge 0.
\end{equation}
Thus, 
\begin{align*}
    \frac{(x-1)^2 - z^2}{x} + \frac{(y-1)^2-z^2}{y}& = \frac{x+y}{xy}\left(xy +1 -z^2\right) - 4\\
    & \ge \frac{2}{\sqrt{xy}}\left(xy +1 -z^2\right) - 4 \quad (\text{AM-GM})\\
    & = \frac{2}{\sqrt{xy}}\left((\sqrt{xy} - 1)^2 -z^2\right)\\
    & \ge 0 \quad (\text{by \eqref{eq:sqrt_xy}}).
\end{align*}
The claim holds.
\end{proof}

\begin{claim}\label{claim:trace_sum} We have 
\[ 2\sum_{v\in V(G)}\tr(P_v)\le n\, \mathcal{E}(G) - 4m.\]
\end{claim}

\begin{proof}
Observe that 
\begin{equation}\label{eq:trace_P_1}
    \tr(P_v) = \sum_{u\in S(v)} P_{uu} - \frac{1}{P_{vv}}\sum_{u\in S(v)}(P_{uv})^2.
\end{equation}
For each $u\in V(G)$, there are $n-1-\deg(u)$ vertices $v$ such that $u\in S(v)$. Thus, summing \eqref{eq:trace_P_1} over $v$, we see that 
\begin{align*}
    2\sum_{v\in V(G)} \tr(P_v) & =  2\sum_{u\in V(G)} (n-1-\deg(u))P_{uu} - \sum_{v\in V(G)}\frac{2}{P_{vv}}\sum_{u\in S(v)}(P_{uv})^2\\
    & = 2n\left(\sum_{v\in V(G)}P_{vv}\right) - 2\sum_{v\in V(G)} (\deg(v)+1)P_{vv} - \sum_{v\in V(G)}\frac{2}{P_{vv}}\sum_{u\in S(v)}(P_{uv})^2\\
    & = n\,\mathcal{E}(G) - \sum_{v\in V(G)} (\deg(v)+1)B_{vv} - \sum_{v\in V(G)}\frac{1}{B_{vv}}\sum_{u\in S(v)}(B_{uv})^2,
\end{align*}
where the last equality holds by \eqref{eq:trace_P_energy} and \eqref{eq:B_entry} and the fact that $2P_{uv} = B_{uv}$ whenever $u\in S(v)$. So to prove the claim, we only need to show that 
\[\sum_{v\in V(G)} (\deg(v)+1)B_{vv} + \sum_{v\in V(G)}\frac{1}{B_{vv}}\sum_{u\in S(v)}(B_{uv})^2\ge 4m,\]
which is equivalent to
\[\sum_{v\in V(G)} (\deg(v)+1)B_{vv} + \sum_{v\in V(G)}\frac{1}{B_{vv}}\sum_{u\in S(v)}(B_{uv})^2 - 2\sum_{v\in V(G)}\deg(v) \ge 0,\]
which, in turn, is equivalent to 
\begin{equation}\label{eq:edge_inequality_sum}
     \sum_{uv \in E(G)} \left(\frac{(B_{uu}-1)^2-(B_{uv})^2}{B_{uu}} + \frac{(B_{vv}-1)^2-(B_{uv})^2}{B_{vv}}\right)\ge 0.
\end{equation}
To see the last equivalence, note that 
\[\deg(v) = (B^2)_{vv} = (B_{vv})^2 + \sum_{uv \in E(G)} (B_{uv})^2 + \sum_{u\in S(v)}(B_{uv})^2.\]

Clearly, \eqref{eq:edge_inequality_sum} holds by Claim \ref{claim:B_edge_inequality}. This completes the proof of the claim.
\end{proof}

Combining Claims \ref{claim:vertex_deleted_trace}, \ref{claim:B_edge_inequality} and \ref{claim:trace_sum}, we get the desired inequality \eqref{eq:deletion_inequality}.
\end{proof}

\section{Proof of Theorem \ref{thm:main}}
\label{sec:main_proof}

\begin{proof}[Proof of Theorem \ref{thm:main}]
We proceed by induction on $n$. If $n = 0$, the assertion is vacuously true. The assertion is immediate for $n=1$; so let $n\ge 2$. Observe that for every
$v\in V(G)$, adding $v$ to any independent set of $G-N[v]$ produces an independent set in $G$. Therefore, for any $v\in V(G)$,
\[ \alpha\left(G-N[v]\right)\le\alpha(G)-1.\]
The graph $G-N[v]$ has order $n-1-\deg(v)$, so the induction hypothesis implies
\[
\begin{aligned}
    \mathcal{E}\bigl(G-N[v]\bigr)
    &\ge 2\left(n-1-\deg(v)-\alpha\bigl(G-N[v]\bigr)\right)\\
    &\ge 2 \left(n-\alpha(G)-\deg(v)\right).
\end{aligned}
\]
Summing this inequality over all $v\in V(G)$, and using $\sum_v\deg(v)=2m$, we
obtain
\begin{equation}\label{eq:induction-sum}
    \sum_{v\in V(G)}\mathcal{E}\bigl(G-N[v]\bigr)
    \ge 2\left(n(n-\alpha(G))-2m\right).
\end{equation}
Lemma~\ref{lemma:deletion} and \eqref{eq:induction-sum} now imply
\[
    n\, \mathcal{E}(G)
    \ge 4m+\sum_{v\in V(G)}\mathcal{E}\bigl(G-N[v]\bigr)
    \ge 2n(n-\alpha(G)).
\]
Dividing by $n$ gives the desired inequality 
\[\mathcal{E}(G)\ge 2\left(n-\alpha(G)\right).\qedhere\]
\end{proof}

\section*{AI statement}
We acknowledge the use of AI tools during the ideation phase. We declare that the text is not AI-generated.

\bibliographystyle{plain}
\bibliography{references}

\vspace{0.3cm}
\noindent Hitesh Kumar, Email: {\tt hitesh.kumar.math@gmail.com}, {\tt hitesh\_kumar@sfu.ca}\\
\textsc{Department of Mathematics, Simon Fraser University, Burnaby, BC \ V5A1S6, Canada}\\[1pt]

\noindent Shivaramakrishna Pragada,
Email: {\tt shivaramkratos@gmail.com, shivaramakrishna\_pragada@sfu.ca}\\
\textsc{Department of Mathematics, Simon Fraser University, Burnaby, BC \ V5A1S6, Canada}\\[1pt]

\end{document}